\newtheorem{thm}{Theorem}[section]
\newtheorem{cor}[thm]{Corollary}
\DeclareMathOperator*{\im}{im} 
\DeclareMathOperator*{\rank}{rank} 
\DeclareMathOperator*{\id}{id}
\newcommand{\QQ}{\mathbb{Q}}
\newcommand{\ZZ}{\mathbb{Z}}
\newcommand{\PP}{\mathbb{P}}
\newcommand {\C} {{\mathbb C}}
\newcommand {\R} {{\mathbb R}}
\newcommand {\Z} {{\mathbb Z}}
\newcommand {\Q} {{\mathbb Q}}
\newcommand {\E} {{\mathcal E}}
\newcommand {\dt} {{\bullet}}
 \newtheorem{lemma}{Lemma}[section]
 \newtheorem{prop}{Proposition}[section]
 \newtheorem{remark}{Remark}[section]
\begin{document}
\title{ A new class of surfaces with maximal Picard number}
\author{
        Donu Arapura    
}
\thanks{First author partially supported by NSF}
\author{Partha Solapurkar
}
\address{Department of Mathematics\\
Purdue University\\
West Lafayette, IN 47907\\
U.S.A.}
 \maketitle

 \begin{abstract}
   A new class of examples of surfaces with maximal Picard number is
   constructed. These carry pencils of genus two or three curves such
   their Jacobian fibrations are isogenous to fibre products of
   elliptic modular surfaces.
 \end{abstract}

Let us say that  a smooth complex projective surface is Picard maximal
if the Picard number $\rho$ equals the Hodge number $h^{11}$; in
other words, if $\rho$  is as large as possible. It is easy to see that any surface with trivial geometric
genus is Picard maximal, but finding examples with $p_g>0$ is much
subtler. Picard maximal abelian or K3 surfaces can be constructed by
taking a product of a CM elliptic curve with itself or the associated
Kummer surface. Shioda \cite{shioda} showed that
all elliptic modular surfaces are Picard maximal.
The first published examples of general type go back to Persson
\cite{persson}, who used double covers branched over rather special
configurations of curves. A few more
examples have since been found, and we refer to the  Beauville's article
\cite{beauville} for a survey. The goal of this note is
give some new, and we believe rather natural, examples, most of which have general type. The
inspiration for us came from  Shioda's work \cite{shioda}
mentioned above. Our examples, which
carry pencils of genus two or three curves, are related in the sense
that the Jacobian fibrations
are isogenous to  fibre products of elliptic modular
surfaces. This is the key point that makes the examples work.
 It is worth remarking that  these examples are defined over $\bar \Q$, and they give new
examples where Tate's conjecture holds.

\section{Hodge theory of fibered surfaces}

Let $X$ be a smooth complex projective surface. Our interest is in $H^2(X)$. This
carries a canonical weight $2$ Hodge structure with respect to which the
Neron-Severi group $NS(X)$ can be identified with $H^2(X,\Z)\cap
H^{11}(X)$ by Lefschetz's theorem. This yields the well known
inequality $\rho(X)= \rank NS(X)\le h^{11}(X)$. We will say that $X$ is
{\em Picard maximal}, if equality  holds. Part of the interest in this
class stems from the following observation of Faltings  \cite[pp
81-82]{tate}: If $X$ is defined over a
finitely generated subfield $k\subset \C$ and $X_\C$ is Picard
maximal, then Tate's conjecture holds for $X$, i.e. Galois invariant
part of \'etale cohomology $H_{et}^2(X_{\bar k},\Q_\ell(1))^{Gal(\bar k/k)}$ is
generated by divisors.

Now suppose that $X$ carries a surjective
holomorphic map $f:X\to C$ with connected fibres  to a  smooth
projective curve. We suppose also that $f$ possesses a section
$\sigma:C\to X$.
Let $g$ be the genus of the general fibres of $f$
and let $q$ be the genus of $C$.  Also let  $j:U\to C$ be a nonempty
Zariski open
set over which $X^o=f^{-1}U\to U$ is smooth.
We can analyze $H^2(X)$ using either the
Leray spectral sequence or the decomposition theorem. We use the
latter since it  is bit more convenient. By Saito's version of the
decomposition theorem \cite[p 857]{saito}, we can decompose $\R f_*\Q$ 
as a sum of intersection cohomology complexes up to shift in the
constructible derived category, and moreover these complexes underly
pure Hodge modules. 
By restricting this sum to $U$ and
applying Deligne's theorem \cite{deligne-L},   we can identify some of 
these components explicitly:
\begin{equation}
  \label{eq:decomp1}
\R f_*\Q \cong  \underbrace{\Q}_{f_*\Q}\oplus  j_*j^*R^1f_*\Q[-1]\oplus  \underbrace{\Q}_{j_*j^*R^2f_*\Q}[-2]\oplus
  M
\end{equation}
The, as yet undetermined, term $M$  is supported on the finite set $S=C-U$.
This yields a (noncanonical) decomposition
\begin{equation}
  \label{eq:decomp2}
H^{2}(X,\Q) \cong   f^*H^2(C,\Q)\oplus IH^1(R^1f_*\Q)\oplus H^0(C, j_*j^*R^2f_*\Q)\oplus H^2(M)  
\end{equation}
where $IH^1(R^1f_*\Q)=H^1(j_*j^*R^1f_*\Q)$. The first summand on the
right is spanned by the fundamental class $[X_t]$ of a fibre. The
third summand is spanned by the class $[\sigma]$.
To calculate $M$, we restrict to a point $s\in S$, and observe that $H^*(\R
f_*\Q|_s)$ is the cohomology of the fibre $X_s=f^{-1}(s)$ by proper
base change \cite[p 41]{dimca}. Therefore $M$
gives the excess cohomology not coming from the  the preceding terms in \eqref{eq:decomp1}. Let
$$X_s=\sum_{i=1}^{m_s} n_{s,i}X_{s,i}$$
be the decomposition into irreducible components. Let $D_s$ be a small disk
centered at $s$, $t\in D_s^*=D_s-\{s\}$, and $\gamma_s\in
\pi_1(D_s^*,t)$ a generator. Then after
combining the local invariant cycle theorem \cite{schmid} with some
elementary topological arguments, we see that
$$H^i(X_s,\Q)=
\begin{cases}
  \Q  &\text{if $i=0$}\\
H^1(X_t,\Q)^{\gamma_s} \cong (j_*j^*R^1f_*\Q)_s&\text{if $i=1$}\\
  \Q^{m_s}  &\text{if $i=2$}\\
  0 &\text{otherwise}
\end{cases}
$$
Therefore $M \cong \bigoplus \Q_s^{m_s-1}[-2]$.
From \eqref{eq:decomp2}, we deduce that we have
a noncanonical decomposition
\begin{equation}
  \label{eq:decomp}
H^{2}(X) \cong   IH^1(R^1f_*\Q)\oplus \Q[\sigma]\oplus\Q[X_t]  \oplus \bigoplus_{s} \Q^{m_s-1}  
\end{equation}
We can see that the last two summands are spanned by divisor classes
supported on the fibres. Since the decomposition \eqref{eq:decomp1}
can be lifted to the derived category of Hodge modules, we see that \eqref{eq:decomp} becomes
an isomorphism of Hodge structures provided that all the summands in \eqref{eq:decomp}  except the
first are viewed as sums of  the  Tate structures $\Q(-1)$. The first
summand  $IH^1(R^1f_*\Q)$  is the interesting piece. We note that
an intrinsic  Hodge structure on  it  was first constructed by
Zucker \cite{zucker}.
Since all  but the
first summand on the right of \eqref{eq:decomp} are spanned by divisor
classes, we may conclude that:

\begin{prop}
  If $f:X\to C$ is a map satisfying the above assumptions, then $X$ is
  Picard maximal if and only if $IH^1(R^1f_*\C)^{(1,1)}$ is spanned by
  divisors. In particular,  this is the case if $IH^1(R^1f_*\C)^{(1,1)}=0$. 
\end{prop}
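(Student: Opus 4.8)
The plan is to extract everything from the Hodge-structure decomposition \eqref{eq:decomp}. First I would fix one of the (noncanonical) splittings that makes \eqref{eq:decomp} an isomorphism of $\Q$-Hodge structures, writing $H^{2}(X,\Q)\cong IH^1(R^1f_*\Q)\oplus W$ with $W=\Q[\sigma]\oplus\Q[X_t]\oplus\bigoplus_{s}\Q^{m_s-1}$. By construction every summand of $W$ is a copy of $\Q(-1)$, and $W$ is spanned inside $H^2(X,\Q)$ by divisor classes, namely $[\sigma]$, $[X_t]$, and the classes of components of the fibres over $S$. Passing to $(1,1)$-classes and using that $W$ is entirely of type $(1,1)$ we obtain $H^{11}(X)=IH^1(R^1f_*\C)^{(1,1)}\oplus W_\C$, and therefore
\begin{equation*}
h^{11}(X)=\dim_\Q W+\dim_\C IH^1(R^1f_*\C)^{(1,1)} .
\end{equation*}

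Next I would locate $NS(X)$ in this picture. By the Lefschetz $(1,1)$-theorem $NS(X)\otimes\C$ is a $\C$-subspace of $H^{11}(X)$, and it contains $W_\C$ because $W$ is spanned by divisor classes. Restricting the projection $H^2(X,\C)=IH^1(R^1f_*\C)\oplus W_\C\to IH^1(R^1f_*\C)$ to $NS(X)\otimes\C$, the kernel is exactly $NS(X)\otimes\C\,\cap\,W_\C=W_\C$ and the image is a $\C$-subspace of $IH^1(R^1f_*\C)^{(1,1)}$ (the projection is a morphism of Hodge structures). Hence $\rho(X)=\dim_\Q W+\dim_\C(\mathrm{image})$. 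Comparing with the formula for $h^{11}(X)$ above, $X$ is Picard maximal if and only if this image is all of $IH^1(R^1f_*\C)^{(1,1)}$, which is precisely the statement that $IH^1(R^1f_*\C)^{(1,1)}$ is spanned by divisors; in particular this holds, vacuously, when $IH^1(R^1f_*\C)^{(1,1)}=0$.

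The one point deserving care is the bookkeeping between rational and complex coefficients, together with the fact that the splitting in \eqref{eq:decomp} is noncanonical. Here I would note that, once a splitting is fixed, the projection $\pi\colon H^2(X,\Q)\to IH^1(R^1f_*\Q)$ is a $\Q$-linear morphism of Hodge structures, so it sends Hodge classes to Hodge classes and commutes with $-\otimes\C$; and that $\dim_\C\bigl((\pi\otimes\C)(NS(X)\otimes\C)\bigr)=\rho(X)-\dim_\Q W$ depends only on $X$ and not on the splitting, so the criterion in the statement is well posed. I do not expect any genuine obstacle: the content is already in \eqref{eq:decomp}, and what remains is the dimension count above.
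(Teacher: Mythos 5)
Your argument is correct and is exactly the route the paper takes: the paper simply observes that all summands of \eqref{eq:decomp} other than $IH^1(R^1f_*\Q)$ are spanned by divisor classes and are Tate twists $\Q(-1)$, and leaves the resulting dimension count implicit. You have merely written out that count (projecting $NS(X)\otimes\C$ onto the $IH^1$ factor and comparing with $h^{11}$), together with the correct remark that the conclusion is independent of the choice of noncanonical splitting.
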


Although the decomposition \eqref{eq:decomp} is not canonical, we note
that $IH^1(R^1f_*\Q)$ is the canonical subquotient $L^1/L^2$ of
$H^2(X)$, where $L^\dt$ is filtration associated to the Leray spectral
sequence. In particular, given a commutative diagram
$$
\xymatrix{
 X'\ar[r]^{g}\ar[rd]^{f'} & X\ar[d]^{f} \\ 
  & C
}
$$
the map $g^*:H^2(X)\to H^2(X')$ will take $IH^1(R^1f_*\Q)$ to
$IH^1(R^1f'_*\Q)$. 

We recall that the Mordell-Weil group $MW(X/C)$ is the group of
sections of the associated Jacobian fibration $Pic^0(X^o/U)$. The group is finitely
generated if $Pic^0(X^o/U)$  has no fixed
  part,  i.e. no nonzero constant abelian subschemes \cite{conrad}.
We will say that a surface satisfying $IH^1(R^1f_*\C)^{(1,1)}=0$ is
{\em extremal}. While this  terminology is not very descriptive,  it conforms to
standard usage in elliptic surface theory \cite[p 75]{miranda} because of the following:

\begin{lemma}
  Suppose that the Jacobian fibration associated to $X^o/U$ has no fixed
  part.  If $X/C$ is extremal then the rank of $MW(X/C)$ is zero. The converse holds when $X$ is Picard maximal.
\end{lemma}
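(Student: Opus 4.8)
The plan is to deduce the lemma from the generalized Shioda--Tate identity
\[
  \rho(X) \;=\; 2 + \sum_{s\in S}(m_s-1) + \rank MW(X/C),
\]
combined with the decomposition \eqref{eq:decomp}. Granting the identity, part (a) is immediate: if $X/C$ is extremal then $IH^1(R^1f_*\C)^{(1,1)}=0$, so reading off the $(1,1)$-classes from \eqref{eq:decomp} (where every summand but the first is a sum of copies of $\Q(-1)$, which has $h^{11}=1$) gives $h^{11}(X)=2+\sum_s(m_s-1)$; since always $\rho(X)\le h^{11}(X)$, the identity forces $\rank MW(X/C)=0$. For the converse in part (b), if $\rank MW(X/C)=0$ the identity gives $\rho(X)=2+\sum_s(m_s-1)$, while \eqref{eq:decomp} gives $h^{11}(X)=\dim_\C IH^1(R^1f_*\C)^{(1,1)}+2+\sum_s(m_s-1)$; if in addition $X$ is Picard maximal then $\rho(X)=h^{11}(X)$, and comparing the two expressions yields $\dim_\C IH^1(R^1f_*\C)^{(1,1)}=0$, i.e.\ $X/C$ is extremal.

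It remains to prove the identity. Let $k=\C(C)$ and let $X_\eta$ be the generic fibre of $f$, a smooth projective curve of genus $g$ over $k$ carrying the $k$-point given by $\sigma$; its Jacobian is the generic fibre of the abelian scheme $Pic^0(X^o/U)\to U$, and since an abelian scheme is proper, every $k$-point of the Jacobian extends uniquely to a section over the smooth curve $U$, so $Pic^0(X_\eta)(k)=MW(X/C)$. Restriction of divisors to $X_\eta$ gives a homomorphism $Pic(X)\to Pic(X_\eta)$, and the $k$-point splits $Pic(X_\eta)$ as $\Z\oplus Pic^0(X_\eta)(k)$ via the degree. By hypothesis $MW(X/C)$ is finitely generated \cite{conrad}, hence so is $Pic(X_\eta)$; since $Pic^0(X)$ is divisible, its image in $Pic(X_\eta)$ is trivial, and the restriction map descends to a homomorphism $\psi\colon NS(X)\to \Z\oplus MW(X/C)$.

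I would then compute the kernel and cokernel of $\psi$. It is surjective, since $[\sigma]\mapsto(1,0)$ and any class in $Pic^0(X_\eta)(k)$ is the image of the Zariski closure in $X$ of a divisor on $X_\eta$ representing it. Its kernel consists exactly of the classes of divisors supported on fibres: if $[D|_{X_\eta}]=0$ then $D|_{X_\eta}=\operatorname{div}(h)$ for some $h\in\C(X)^\times$, so $D-\operatorname{div}_X(h)$ is fibral. By Zariski's lemma the components of a reducible fibre $X_s$ span a negative semidefinite lattice whose kernel is $\Q[X_s]=\Q[X_t]$, fibres over distinct points are orthogonal, and $[X_t]\ne 0$ because $[X_t]\cdot[\sigma]=1$; hence the fibral classes have rank $1+\sum_s(m_s-1)$ in $NS(X)$. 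Adding the ranks of the kernel and image of $\psi$ gives $\rho(X)=\big(1+\sum_s(m_s-1)\big)+\big(1+\rank MW(X/C)\big)$, which is the identity.

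The main obstacle is this Shioda--Tate step, and within it the point where the hypothesis of no fixed part is genuinely used: it guarantees that $MW(X/C)$, hence $Pic(X_\eta)$, is finitely generated, which is what forces the divisible group $Pic^0(X)$ to die under restriction, so that $\psi$ is defined on $NS(X)$ in the first place. The other ingredients --- surjectivity of $\psi$, the identification of its kernel with the fibral classes, and the rank count via Zariski's lemma --- are the familiar ones from elliptic surface theory and carry over with only the mild extra bookkeeping of degrees when lifting a class of $Pic^0(X_\eta)$ to an honest divisor in the genus $\ge 2$ case.
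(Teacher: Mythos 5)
Your proof is correct and follows the same route as the paper: combine the dimension count $\dim IH^1(R^1f_*\C)^{(1,1)} = h^{11}(X)-2-\sum_s(m_s-1)$ coming from \eqref{eq:decomp} with the Shioda--Tate formula $\rank MW(X/C)=\rho(X)-2-\sum_s(m_s-1)$. The only difference is that the paper simply cites the Shioda--Tate formula from \cite{shioda2}, whereas you supply the standard proof of it (restriction to the generic fibre, Lang--N\'eron finiteness via the no-fixed-part hypothesis, and Zariski's lemma for the rank of the fibral classes), which is a correct but optional addition.
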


\begin{proof}
  Formula \eqref{eq:decomp} implies
$$\dim IH^1(R^1f_*\C)^{(1,1)} = h^{11}(X) - 2-\sum (m_s-1)$$
The lemma is consequence of this together with the Shioda-Tate formula
\cite[(4)]{shioda2}
$$\rank MW(X/C) = \rho(X) - 2-\sum (m_s-1)$$
\end{proof}

Given a finite index subgroup $\Gamma\subseteq SL_2(\Z)$, we have an
associated modular curve $U_\Gamma$ given as a quotient of the upper half
plane by $\Gamma$. Let $C_\Gamma$ be the nonsingular compactification.
We can interpret $U_\Gamma$ as the  moduli space of elliptic curves with
some sort of level structure. The associated elliptic modular surface
$\mathcal{E}_\Gamma\to C_\Gamma$ is the universal family of elliptic curves
over $U_\Gamma$ (minus the set of elliptic fixed points) suitably extended to $C_\Gamma$. As Shioda observed
\cite[7.8]{shioda} such surfaces are Picard maximal. In fact, the
stronger property also holds.

\begin{thm}[Shioda]\label{thm:shioda} Suppose that $-I\notin \Gamma$
  and let  $f:\mathcal{E}_\Gamma\to C_\Gamma$ be  corresponding
  elliptic modular surface. Then $\mathcal{E}_\Gamma\to C_{\Gamma}$ is extremal.
\end{thm}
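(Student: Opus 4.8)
The plan is to pin down the weight-two Hodge structure $IH^1(R^1f_*\C)$ explicitly and to read off its Hodge numbers from the Eichler--Shimura isomorphism. Let $U\subseteq C_\Gamma$ be the open set over which $f$ is smooth, and let $V=R^1f_*\Q|_U$ be the weight-one variation of Hodge structure it carries. Because $-I\notin\Gamma$, the local system $V$ is precisely the one attached to the standard two-dimensional representation of $\Gamma$ on $\Q^2$, where $\Gamma$ is regarded as the (orbifold) fundamental group of $U_\Gamma$. First I would recall that, by definition, $IH^1(R^1f_*\Q)=H^1(C_\Gamma,j_*V)$; since $j_*V$ is the intermediate extension of $V$, this group is the parabolic cohomology $H^1_{par}(\Gamma,\Q^2)=\im\bigl(H^1_c(U,V)\to H^1(U,V)\bigr)$ of $\Gamma$ with coefficients in the standard representation. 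The elliptic fixed points, at which $V$ has finite-order monodromy, contribute here only a passage to invariants and do not disturb this identification.

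I would then invoke the parabolic form of the Eichler--Shimura isomorphism for weight three (which is meaningful precisely because $-I\notin\Gamma$):
\[
H^1_{par}(\Gamma,\C^2)\;\cong\;S_3(\Gamma)\,\oplus\,\overline{S_3(\Gamma)},
\]
$S_3(\Gamma)$ being the space of weight-three cusp forms for $\Gamma$. The crucial feature is that this is an isomorphism of weight-two Hodge structures in which $S_3(\Gamma)$ is the $(2,0)$-summand and $\overline{S_3(\Gamma)}$ the $(0,2)$-summand, with no $(1,1)$-part whatsoever. Hence $IH^1(R^1f_*\C)^{(1,1)}=0$, which is exactly the assertion that $\mathcal{E}_\Gamma\to C_\Gamma$ is extremal. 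If one prefers to avoid invoking Hodge compatibility head-on, one can instead combine the dimension identity $\dim IH^1(R^1f_*\Q)=2\dim S_3(\Gamma)$ given by Eichler--Shimura with the standard embedding $S_3(\Gamma)\hookrightarrow IH^1(R^1f_*\C)^{(2,0)}$, realizing cusp forms as holomorphic $2$-forms, together with $h^{2,0}=h^{0,2}$; these force $h^{1,1}=0$.

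The step I expect to be the main obstacle is the boundary bookkeeping: one must check carefully that $H^1(C_\Gamma,j_*V)$ is identified with the group-cohomological object $H^1_{par}(\Gamma,\Q^2)$ in a manner compatible with the Hodge structures --- handling the elliptic fixed points and, when $-I\notin\Gamma$ allows irregular cusps, the resulting quasi-unipotent monodromy --- and, in the same vein, that the Eichler--Shimura isomorphism, classically phrased in terms of dimensions and holomorphic differentials, genuinely respects the $(2,0)+(0,2)$ bigrading. Once these identifications are secured the conclusion is immediate: parabolic cohomology with coefficients in the standard representation sees only cusp forms and their complex conjugates, so its $(1,1)$-part vanishes.
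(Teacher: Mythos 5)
Your argument is correct, and it is essentially the same as the paper's: the paper simply cites Shioda's Theorem 4.12, whose content is precisely the identification of $H^1(C_\Gamma, j_*j^*R^1f_*\Q)$ with parabolic cohomology and the weight-three Eichler--Shimura isomorphism $S_3(\Gamma)\oplus\overline{S_3(\Gamma)}$ that you spell out. In effect you have unpacked the cited reference (including the Hodge-theoretic refinement via Zucker that the $(1,1)$-part vanishes), so there is nothing to add.
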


\begin{proof}
  This is proved in \cite[4.12]{shioda}. (Shioda formulates this using
  Kodaira's homological invariant $G$, but this can be identified with $j_*j^*R^1f_*\Z$.)
\end{proof}

\begin{remark}
  The conclusion also applies  to the Legendre family
  $y^2=x(x-1)(x-\lambda)$, which is the elliptic modular surface for
  $\Gamma(2)$, even though the theorem does not. The point is this
  surface is rational and therefore Picard maximal. Furthermore, Igusa \cite{igusa}
  has shown that the rank of the Mordell-Weil group is zero.
\end{remark}

\section{Frey-Kani construction}

Given an integer $n>0$, by degree $n$ FK data, we will mean the following: a pair of elliptic curves $E, E'$, and an
isomorphism of $n$-torsion subgroups $\phi:E[n]\to E'[n]$ which is an
anti-isometry with respect to the Weil pairings. Let
$\Gamma_\phi\subset (E\times E')[n]$ be subgroup given by the graph
of $\phi$. 

\begin{thm}[Frey-Kani]\label{thm:fk} Given degree $n$  FK data,
  the abelian surface $J=E\times E'/\Gamma_\phi$ carries a unique
  principal polarization $\Theta$ such that $\pi^*\Theta \sim n( E\times
  0+0\times E')$ where $\pi:E\times E'\to J$ is the projection. Either
  \begin{enumerate}
  \item $\Theta$ is an irreducible smooth curve of genus $2$, or
\item $\Theta$ is a sum of two elliptic curves meeting at one point.
  \end{enumerate}
The second case can only happen if $E$ and $E'$ are isogenous.
\end{thm}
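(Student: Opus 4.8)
The plan is to present $J$ as a quotient of $A=E\times E'$ by a Lagrangian subgroup of the kernel of a polarization, and then run Mumford's descent theory for line bundles on abelian varieties. Let $\Theta_0=E\times 0+0\times E'$ be the product principal polarization on $A$ and put $L=\mathcal O_A(n\Theta_0)$, so that $\phi_L=n\,\phi_{\Theta_0}$ and hence $K(L)=\ker\phi_L=A[n]$, which under the product structure is $E[n]\times E'[n]$, of order $n^4$. Because $\Theta_0$ is a product polarization, the commutator pairing $e^L$ on $K(L)$ is the product $e_n^E\times e_n^{E'}$ of the Weil pairings. The key observation is that the graph $\Gamma_\phi=\{(x,\phi(x)):x\in E[n]\}$ is isotropic for $e^L$ exactly because $\phi$ is an anti-isometry: $e^L((x,\phi x),(y,\phi y))=e_n^E(x,y)\,e_n^{E'}(\phi x,\phi y)=1$ for all $x,y\in E[n]$. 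Since $|\Gamma_\phi|=n^2=|K(L)|^{1/2}$, the subgroup $\Gamma_\phi$ is maximal isotropic.

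Next I would apply descent of line bundles along an isogeny with Lagrangian kernel (Mumford's theory of theta groups): $\Gamma_\phi$ admits a level subgroup in $\mathcal G(L)$, so $L$ descends to a line bundle $M$ on $J=A/\Gamma_\phi$ with $\pi^*M\cong L$. Then $\chi(M)=\chi(L)/|\Gamma_\phi|=n^2/n^2=1$, and $M$ is ample since $\pi$ is finite and $\pi^*M=L$ is ample; hence $M$ is a principal polarization with $h^0(M)=1$, defining a unique effective $\Theta\in|M|$. Uniqueness of the polarization then follows thus: if $D$ is any principal polarization on $J$ with $\pi^*D\sim n\Theta_0$, then $\mathcal O_J(D-\Theta)\in\ker(\pi^*\colon Pic(J)\to Pic(A))=\widehat{\Gamma_\phi}\subset Pic^0(J)$, so $D$ is a translate of $\Theta$. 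I expect this descent step to be the main obstacle: one must verify that the genuinely ``diagonal'' Lagrangian $\Gamma_\phi$ carries a level subgroup, so that $L$ really descends; that the descended bundle pulls back to $L$ on the nose (not merely numerically); and — a point that needs care — that the sign conventions are such that ``$\phi$ anti-isometry'', rather than ``isometry'', is the correct isotropy condition.

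The dichotomy (1)--(2) is then the classical classification of principal polarizations on an abelian surface. Adjunction gives $p_a(\Theta)=2$ and $\Theta^2=2$; an elementary intersection count on the abelian surface $J$ (an irreducible curve $F$ has $F^2\ge 0$, with $F^2=0$ forcing $F$ to be an elliptic curve, and $\Theta$ is connected since ample) excludes $\Theta$ being non-reduced or having three or more components, leaving: either $\Theta$ is irreducible, or $\Theta=F_1+F_2$ with $F_1^2=F_2^2=0$ and $F_1\cdot F_2=1$, i.e.\ two elliptic curves meeting transversally at one point. If $\Theta$ were irreducible and singular, its normalization would be an elliptic curve (a rational normalization is impossible, as $J$ has no rational curves) surjecting onto $\Theta$, realizing $\Theta$ as a translate of an elliptic curve and forcing $\Theta^2=0$, a contradiction; so an irreducible $\Theta$ is smooth of genus $2$.

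Finally, to prove the last clause, assume $n\ge 2$ (for $n=1$, $\Gamma_\phi$ is trivial and $J=E\times E'$) and suppose we are in case (2), $\Theta=F_1+F_2$. After translating $\Theta$ so that $F_1\cap F_2$ is the origin, each $F_i$ is an elliptic subcurve through $0$; let $\widetilde F_i\subset A$ be the identity component of $\pi^{-1}(F_i)$, an elliptic curve with $\pi(\widetilde F_i)=F_i$. Each of the two projections of $\widetilde F_i$ to $E$ and to $E'$ is zero or an isogeny. If for some $i$ both are isogenies, then $E$ and $E'$ are both isogenous to $\widetilde F_i$, and we are done. Otherwise each $\widetilde F_i$ equals $E\times 0$ or $0\times E'$; they cannot coincide (that would give $F_1=F_2$, so $\Theta^2=0$), so after relabelling $\widetilde F_1=E\times 0$ and $\widetilde F_2=0\times E'$. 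Since $\phi$ is injective, $\Gamma_\phi\cap(E\times 0)=0$, so $\pi$ is injective on $E[n]\times 0$ and $F_1\cap F_2=\pi(E[n]\times 0)$ has $n^2>1$ points, contradicting $F_1\cdot F_2=1$. Hence in case (2) the elliptic curves $E$ and $E'$ must be isogenous.
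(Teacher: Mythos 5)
The paper offers no argument of its own for this theorem: its ``proof'' is a one-line citation to Frey--Kani and to Kani \cite[1.5, 2.3]{kani}, so your self-contained argument is by construction a different route --- and it is a correct one, essentially reconstructing Kani's proof. The skeleton is sound: for $L=\cO_A(n\Theta_0)$ one has $K(L)=A[n]$ with commutator pairing $e_n^E\times e_n^{E'}$, the anti-isometry condition on $\phi$ is precisely isotropy of $\Gamma_\phi$, and $|\Gamma_\phi|^2=|K(L)|$ makes it Lagrangian. The step you single out as the main obstacle --- existence of a level subgroup over $\Gamma_\phi$ --- is in fact automatic over $\C$: the theta group restricted to an isotropic subgroup is an abelian central extension of a finite group by $\C^*$, which splits because $\C^*$ is divisible; alternatively one can descend the polarization $\phi_L$ rather than the line bundle and bypass theta groups altogether. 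The dichotomy via $\Theta^2=2$ and the exclusion of singular irreducible $\Theta$ are handled correctly, and your final count $|F_1\cap F_2|=n^2>1=F_1\cdot F_2$ is exactly the right mechanism for the last clause. Two small caveats. First, uniqueness can only mean uniqueness of the polarization class: translates of $\Theta$ by the $n^2$ points of $\pi(A[n])$ give distinct effective divisors whose pullbacks are still linearly equivalent to $n\Theta_0$, so your conclusion ``$D$ is a translate of $\Theta$'' is the correct and maximal statement. Second, the final clause genuinely requires $n\ge 2$ (for $n=1$ one has $J=E\times E'$ with reducible $\Theta$ and no isogeny); you correctly assume this, though the paper's ``$n>0$'' glosses over it.
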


\begin{proof}
This was proved in   \cite{fk} with some addition restrictions. These
were relaxed in  \cite[1.5, 2.3]{kani}.
\end{proof}

If case (1) holds above, we will say that the FK data is {\em irreducible}, or that the anti-isometry $ \phi $ is {\em irreducible}.

\begin{thm}[Kani]
If $ n $ is prime, then 
\begin{enumerate}
\item There exists an irreducible anti-isometry $\phi:E[n]\to E'[n]$ for any pair of elliptic curves $E,E'$.
\item An anti-isometry $ \phi: E[n] \to E'[n] $ is reducible if and only if there is an isogeny $ h: E \to E' $ of degree $ k(n-k) $ for some $ 1 \leq k < n $, such that 
\[ \phi \circ [k] = h\vert_{E[n]}. \]

\end{enumerate}
\end{thm}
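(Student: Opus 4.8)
The plan is to reduce both statements to counting and structural facts about anti-isometries of $E[n]$ and $E'[n]$ as symplectic $\mathbb{F}_n$-modules, using Theorem~\ref{thm:fk} to translate "reducible" into the geometric statement that $\Theta$ splits as a sum of two elliptic curves. For part (2), I would argue as follows. Suppose $\phi$ is reducible, so that $\Theta = A + B$ with $A,B$ elliptic curves meeting at a point, and write $A,B$ for their classes; since $\Theta^2 = 2$ and $A\cdot B = 1$ we get $A^2 = B^2 = 0$, so $A$ and $B$ are fibres of two elliptic fibrations on the abelian surface $J$, hence translates of two elliptic subgroups $A_0, B_0 \subset J$ with $J$ isogenous to $A_0\times B_0$. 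Pulling back along $\pi:E\times E'\to J$ and using $\pi^*\Theta \sim n(E\times 0 + 0\times E')$, the preimages $\pi^{-1}A_0$, $\pi^{-1}B_0$ become subgroups of $E\times E'$ whose classes add up to $n(E\times 0 + 0\times E')$; analyzing the bidegrees of these subgroups (i.e., the degrees of their projections to $E$ and $E'$) forces one of them, say $\pi^{-1}A_0$, to be the graph of an isogeny $h:E\to E'$ of some degree $k$, and the other the graph of an isogeny of degree $n-k$ in the opposite direction, with $1\le k < n$. One then checks that $\Gamma_\phi \subset \pi^{-1}A_0 \cup \pi^{-1}B_0$ at the level of $n$-torsion translates precisely into the compatibility $\phi\circ[k] = h|_{E[n]}$, after identifying $h$ with its "transpose" isogeny using the $k(n-k)$-relation $h^\vee\circ h = [k(n-k)]$ that the two subgroups force; conversely, given such an $h$, the two graphs reconstruct a splitting of $\Theta$, so $\phi$ is reducible. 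The degree of $h$ is pinned down as $k(n-k)$ because $h^\vee \circ h$ must act as multiplication by the product of the two "widths", and on $E[n]$ this composite is determined by $\phi$.

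For part (1), existence of an irreducible anti-isometry when $n$ is prime, I would count. Fix any anti-isometry $\phi_0:E[n]\to E'[n]$ (these exist since $E[n]$ and $E'[n]$ are both $2$-dimensional symplectic $\mathbb{F}_n$-spaces, and an anti-isometry is an isometry composed with a symplectic automorphism, so the set of anti-isometries is a torsor under $Sp_2(\mathbb{F}_n) = SL_2(\mathbb{F}_n)$, of order $n(n^2-1)$). Every other anti-isometry is $\phi = \phi_0\circ \alpha$ for a unique $\alpha\in SL_2(\mathbb{F}_n)$. By part (2), $\phi$ is reducible exactly when it is of the form $h|_{E[n]}\circ[k]^{-1}$ for an isogeny $h$ of degree $k(n-k)$, $1\le k<n$; the key point is that each such $h$ contributes only a bounded number of reducible $\phi$'s — essentially the number of ways $h$ can restrict to an anti-isometry on $n$-torsion after the scaling by $[k]$, which is controlled because $h|_{E[n]}$ is determined up to the finite automorphism groups and the finitely many cyclic subgroups involved — while the number of candidate isogenies $h$ of each bounded degree $k(n-k) < n^2$ is $O(n^{1+\varepsilon})$ uniformly (number of isogenies of degree $d$ from a fixed $E$ is at most $\sigma_1(d)$, bounded polynomially in $d$). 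Summing over $1\le k<n$ gives at most $O(n^{2+\varepsilon})$ reducible anti-isometries, which is strictly less than $|SL_2(\mathbb{F}_n)| = n(n^2-1) \sim n^3$ for $n$ large; the small primes are then handled directly. Hence an irreducible $\phi$ exists.

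The main obstacle I anticipate is part (2), specifically the bookkeeping that turns the splitting $\Theta = A + B$ into the precise numerical relation $\phi\circ[k] = h|_{E[n]}$ with $\deg h = k(n-k)$: one must keep careful track of the polarization-compatibility (the anti-isometry condition on $\phi$ with respect to the Weil pairings) through the isogeny $\pi$, since the Weil pairing on $\Gamma_\phi$ relates to that on $E[n]\times E'[n]$ by the $n$-scaling implicit in $\pi^*\Theta \sim n(\cdots)$, and it is exactly this scaling that produces the factor $k(n-k)$ rather than, say, $k$ or $n$. A secondary technical point is ensuring the counting in part (1) is genuinely uniform in $n$ and that the edge cases $k=1$ and small $n$ do not secretly exhaust all anti-isometries; for $n=2,3$ one can simply exhibit an irreducible $\phi$ by hand, or observe via Theorem~\ref{thm:fk} that case (2) requires $E,E'$ isogenous, so for a generic non-isogenous pair \emph{every} anti-isometry is irreducible and only the isogenous pairs need the counting argument at all — which in fact simplifies the whole proof of (1) to: if $E$ and $E'$ are not isogenous, any $\phi$ works, and if they are isogenous, run the count above.
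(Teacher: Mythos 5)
The paper does not actually prove this theorem: its ``proof'' is the citation to Kani's Theorems 2 and 3, so there is no in-paper argument to measure yours against, and any real proof here is new content. Taken on its own terms, your sketch of part (2) starts correctly ($\Theta^2=2$ and $A\cdot B=1$ force $A^2=B^2=0$, so $A$ and $B$ are translates of elliptic subgroups $A_0,B_0\subset J$), but the decisive step is false as stated: the preimages of $A_0,B_0$ in $E\times E'$ cannot both be graphs of isogenies. The graph of an isogeny $E\to E'$ of degree $k$ meets $0\times E'$ in one point and $E\times 0$ in $k$ points, and the graph of an isogeny $E'\to E$ of degree $n-k$ meets them in $n-k$ and $1$ points respectively, so a sum of two such graphs has intersection numbers $(1+(n-k),\,k+1)$ with the two fibres, whereas $\pi^*\Theta\sim n(E\times 0+0\times E')$ has $(n,n)$; these are incompatible for every $n>2$. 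In reality the identity components of $\pi^{-1}(A_0)$ and $\pi^{-1}(B_0)$ are elliptic subgroups projecting to each factor with degrees $>1$ in general, and the isogeny $h$ of degree $k(n-k)$, together with the relation $\phi\circ[k]=h|_{E[n]}$, has to be extracted from the correspondences these subgroups define. You correctly flag this bookkeeping as the main obstacle, but it is precisely the content of the theorem and is left undone.

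The counting in part (1) also has a quantitative error. The number of isogenies of degree $d$ out of a fixed $E$ is indeed at most $\sigma_1(d)$, but $\sigma_1(d)\ge d$, so your upper bound satisfies $\sum_{k=1}^{n-1}\sigma_1(k(n-k))\ge\sum_{k}k(n-k)=n(n^2-1)/6$, which is the same order of magnitude as $|SL_2(\Z/n\Z)|=n(n^2-1)$; the claimed $O(n^{2+\varepsilon})$ does not follow and the comparison fails even asymptotically. What saves the idea is that one only needs isogenies into the \emph{fixed} target $E'$: $\mathrm{Hom}(E,E')$ has rank at most $2$ over $\Z$ and $\deg$ is a positive definite quadratic form on it, so the number of isogenies of a given degree $d$ is a representation number of a binary form, which is $O(d^\varepsilon)$; with that replacement the sum is $O(n^{1+\varepsilon})$ and the count succeeds for large $n$. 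Even then, the small primes --- exactly where the paper applies the theorem --- are deferred rather than handled, and for $n=3$ or $5$ with CM curves the naive bounds are not conclusive without looking at which degrees $k(n-k)$ are actually represented. Your reduction via Theorem~\ref{thm:fk} to the case where $E$ and $E'$ are isogenous is correct and genuinely useful, but as written the proposal is an outline of Kani's strategy with both hard steps open.
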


\begin{proof}
  This was proved in \cite[Theorems 2 and 3]{kani}. 
\end{proof} 

\begin{cor}
  Let $ E $ be an elliptic curve without complex multiplication and let $ n $ be a prime such that $ n \equiv 3 \mod 4 $. Then all the anti-isometries $ \phi: E[n] \to E'[n] $ are irreducible. 
\end{cor}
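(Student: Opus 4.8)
The natural approach is proof by contradiction via Kani's theorem. Suppose some anti-isometry $\phi\colon E[n]\to E'[n]$ is reducible. By part (2) of that theorem there is an isogeny $h\colon E\to E'$ of degree $k(n-k)$, for some $1\le k<n$, with $\phi\circ[k]=h|_{E[n]}$. Since $h\neq 0$ the curves $E$ and $E'$ are isogenous, and since $E$ (hence $E'$) has no complex multiplication, $\mathrm{End}(E)=\mathrm{End}(E')=\Z$ and $\mathrm{Hom}(E,E')$ is infinite cyclic.

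The key point is that $d:=k(n-k)=\deg h$ must be a perfect square. In the case $E'=E$ this is immediate, since then $h\in\mathrm{End}(E)=\Z$, so $h=[t]$ for an integer $t$ and $d=\deg h=t^2$. To handle $E'\neq E$ I would use that reducibility is symmetric in $E$ and $E'$ (the Frey–Kani datum $(E',E,\phi^{-1})$ yields an isomorphic principally polarized surface), so $\phi^{-1}$ is reducible as well and Kani's theorem also produces an isogeny $h'\colon E'\to E$ with $\phi^{-1}\circ[k']=h'|_{E'[n]}$ for some $1\le k'<n$. Then $h'\circ h\in\mathrm{End}(E)=\Z$, and comparing degrees—using that $h'\circ h=[kk']$ on $E[n]$ since $\phi^{-1}\circ\phi=\mathrm{id}$—together with the fact that $\mathrm{Hom}(E,E')$ has rank one should pin $d$ down as a square. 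This is where the no-complex-multiplication hypothesis is used essentially, and it is the step I expect to require the most care: one must verify that $h'$ really supplies new information rather than being forced to be (a multiple of) the dual of $h$, in which case nothing would be gained.

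Granting $d=t^2$, the remainder is elementary number theory. Since $1\le k<n$, $k$ is a unit modulo the prime $n$, and $t^2=d=k(n-k)\equiv -k^2\pmod n$, so $(t\,k^{-1})^2\equiv-1\pmod n$; thus $-1$ is a quadratic residue modulo $n$. This contradicts $n\equiv3\pmod 4$, for which $(-1)^{(n-1)/2}=-1$. Equivalently, $n^2=(n-2k)^2+(2t)^2$ with $n-2k\neq 0$ (as $n$ is odd) and $2t\neq 0$ (as $d\ge n-1>0$) would exhibit $n^2$ as a sum of two nonzero squares, which a prime $n\equiv3\pmod4$ does not admit. Either way we reach a contradiction, so every anti-isometry $\phi\colon E[n]\to E'[n]$ is irreducible.
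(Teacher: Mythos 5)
Your treatment of the case $E'=E$ is exactly the paper's argument: since $E$ has no complex multiplication, $\mathrm{End}(E)=\Z$, so any isogeny $h:E\to E$ has degree a perfect square $m^2$, and $m^2=k(n-k)\equiv -k^2 \pmod n$ would make $-1$ a quadratic residue modulo $n$, which is impossible for a prime $n\equiv 3\pmod 4$. Note that the paper's own proof only ever considers isogenies $h:E\to E$; in the application of Section 3 the curves $E$ and $E'$ are the same elliptic curve equipped with two level structures, so the corollary is meant (and proved) with $E'=E$, and for that reading your first paragraph together with the quadratic-residue computation is a complete proof identical to the paper's.

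The extension you attempt for $E'\neq E$ contains a genuine gap, and the worry you flag is precisely the problem. If $E'$ is isogenous to $E$ but not isomorphic to it, then $\mathrm{Hom}(E,E')$ is infinite cyclic generated by some $h_0$ of degree $d_0$, so degrees of isogenies $E\to E'$ have the form $m^2d_0$ with $d_0$ not necessarily $1$; the key claim that $\deg h$ is a perfect square simply fails. The symmetric argument does not rescue it: one gets $h'\circ h=[m]$ with $m\equiv kk'\pmod n$ and $m^2=k(n-k)\,k'(n-k')$, but $k(n-k)k'(n-k')\equiv(-k^2)(-k'^2)=(kk')^2\pmod n$, so the two pieces of information are consistent and no contradiction is available --- $h'$ is indeed forced to be a multiple of the dual of $h$ and carries nothing new. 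In fact the statement is false for general $E'$: take $n=3$, $E$ without CM, $P\in E$ of order $2$, $E'=E/\langle P\rangle$, and let $h:E\to E'$ be the quotient isogeny of degree $2=1\cdot(3-1)$. Then $\phi=h|_{E[3]}$ is an anti-isometry (the Weil pairing transforms by the exponent $\deg h=2\equiv -1\pmod 3$), and it is reducible by Kani's criterion with $k=1$. So the corollary must be read with $E'=E$; restricted to that case your proof is correct and coincides with the paper's.
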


\begin{proof}
  We prove that there are no isogenies $ h: E \to E $ of degree $ k(n-k) $ for any $ 1 \leq k < n $. If $ h:E \to E $ is an isogeny, then $ \deg(h) = m^2 $ for some integer $ m $, since $ E $ does not have complex multiplication. But $ m^2 $ cannot equal $ k(n-k) $, since otherwise $ -1 $ would be a quadratic residue modulo $ n $. 
\end{proof}

\begin{cor}
  Let $ E $ be any elliptic curve such that $ j(E) \neq 0, 1728 $. Then all the anti-isometries $ \phi: E[2] \to E[2] $ which are not equal to the identity are irreducible. 
\end{cor}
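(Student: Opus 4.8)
The plan is to invoke Kani's theorem (the displayed theorem above) with $n=2$ and to exploit the fact that this case is degenerate. Since $2$ is prime, part (2) of that theorem applies: an anti-isometry $\phi:E[2]\to E'[2]$ is reducible precisely when there is an isogeny $h:E\to E'$ of degree $k(2-k)$ with $1\le k<2$ and $\phi\circ[k]=h|_{E[2]}$. The only admissible value is $k=1$, for which $k(2-k)=1$ and $[k]=\id$. Hence, taking $E'=E$ as in the statement, $\phi$ is reducible if and only if $\phi=h|_{E[2]}$ for some degree-one isogeny $h:E\to E$, i.e. for some automorphism $h$ of $E$ as an elliptic curve.

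Next I would compute the automorphism group. Because $j(E)\neq 0,1728$, one has $\mathrm{Aut}(E)=\{\pm\id\}$. Both elements restrict to the identity on $E[2]$, since every $2$-torsion point $P$ satisfies $-P=P$. Therefore the identity is the only anti-isometry of $E[2]$ arising by restriction from an automorphism of $E$, and Kani's criterion then shows that every anti-isometry $\phi:E[2]\to E[2]$ other than the identity is irreducible.

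It is worth recording, as a sanity check, that for $n=2$ the Weil pairing on $E[2]$ takes values in $\mu_2$, so the anti-isometry condition coincides with the isometry condition and is automatic for any group isomorphism respecting the pairing; in fact all six elements of $GL_2(\FF_2)$ are anti-isometries, and the above shows that exactly one of them, the identity, is reducible. There is essentially no obstacle in this argument: the only point requiring care is the observation that the $[k]$-twist in Kani's criterion disappears when $n=2$, reducing reducibility to "being induced by an automorphism of $E$", after which the triviality of the $\{\pm\id\}$-action on $2$-torsion finishes the proof.
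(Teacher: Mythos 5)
Your proof is correct and follows essentially the same route as the paper: apply Kani's criterion with $n=2$ (so $k=1$, degree-one isogeny, i.e.\ an automorphism), use $\mathrm{Aut}(E)=\{\pm\id\}$ for $j(E)\neq 0,1728$, and note that $\pm\id$ act trivially on $E[2]$. The paper's proof is just a terser version of the same argument, and your sanity check about anti-isometries coinciding with isometries for $n=2$ is exactly the remark the paper opens its proof with.
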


\begin{proof}
  For $ n=2 $, anti-isometries are exactly the same as isometries. Since $ E $ has no automorphisms other than $ \pm \id_E $ when $ j(E) \neq 0, 1728 $, all the anti-isometries which are not the identity are irreducible. 
\end{proof} 

\begin{remark}\label{rmk:irred}
  The $\Theta$-divisors associated to  $\phi$ and $-\phi$ are the
same as abstract curves. Therefore $\phi$ is irreducible if and only
if $-\phi$ is. Thus we may extend this terminology to orbits $\{\pm \phi\}$.
\end{remark}

\section{The examples}

Let  $G=SL_2(\Z/n\Z)$ and let $\Gamma(n) = \ker [SL_2(\Z)\to G]$ be the
principal congruence group of level $n$. Let $f:\E=\E_{\Gamma(n)}\to C_{\Gamma(n)}=C$  be  the associated elliptic
modular surface, and let $U\subset C$ be the complement of the
discriminant. The group $G$ acts on $C$ through the quotient $\bar
G =PSL_2(\Z/n\Z)= Gal(C/C_{\Gamma(1)})$. Let $f_\sigma:\E_\sigma\to
C$ be the pullback of  $\E\to C$ along $\sigma:C\to C$. This only
depends on the image $\bar \sigma= \im \sigma\in \bar G$, so we may
also
denote this by $f_{\bar \sigma} :\E_{\bar \sigma}\to C$.
Fix $t\in U$ and a reference anti-isometry
 $\phi_1:(\Z/n\Z)^2\to (\Z/n\Z)^2$  represented by say $\begin{pmatrix}
  0 & 1\\ 1 & 0
\end{pmatrix}$.
 Observe that the fibres $E=\E_t$ and $E'= \E_{\sigma,t}$ are the 
same elliptic curve equipped with different level $n$ structures
$(\Z/n\Z)^2\cong E[n]$ and $(\Z/n\Z)^2\cong E'[n]$. Thus $\phi_1$
induces an anti-isometry $\phi_\sigma:E[n]\cong E'[n]$.

We need to make the construction of $\phi_\sigma$ a bit more precise.
Let $\tilde f:\tilde \E\to H$ be the universal marked elliptic
curve over the upper half plane. By ``marked'', we mean that there is
fixed symplectic isomorphism $\lambda:R^1\tilde f_{*}\Z\cong \Z^2$,
where the right side is equipped with the standard pairing represented
by the matrix 
$
\begin{pmatrix}
  0 & 1\\ -1 & 0
\end{pmatrix}
$.
The group $SL_2(\Z)$ acts equivariantly  on $\tilde f$, and $R^1\tilde
f_*\Z$
is the equivariant constant sheaf associated to the standard
representation of this group. More concretely, this means that for
$\tilde \sigma\in SL_2(\Z)$, the base change map $\tilde \sigma^*R^1\tilde f_*\Z\to
R^1\tilde f_*\Z$ corresponds, under $\lambda$, to multiplication by $\tilde
\sigma$ on $\Z^2$. 
Given a preimage  ${\tilde\sigma}\in SL_2(\Z)$ of $\sigma\in G$, we have
a commutative diagram
$$
\xymatrix{
  & \tilde \E\ar[dd]^{\tilde f}\ar[ld]\ar[rr]^{\tilde \sigma} &  & \tilde \E\ar[dd]^{\tilde f}\ar[ld] \\ 
 \E_\sigma\ar[dd]^{f_\sigma}\ar[rr] &  & \E\ar[dd]^{f} &  \\ 
  & H\ar[rr]^{\tilde\sigma}\ar[ld]^{\pi} &  & H\ar[ld]^{\pi} \\ 
 U\ar[rr]^{\sigma} &  & U & 
}
$$
where for simplicity of notation we have omitted the restriction symbols ``$|_{f^{-1}U}$''.
With a little bit of thought, one sees that all the vertical squares are Cartesian.
We can deduce from this, and the fact that $U=H/\Gamma(n)$, that $\lambda$ modulo $n$
descends to  isomorphisms $\lambda_\sigma:R^1f_{\sigma  *}\Z/n\Z|_{U}\cong
(\Z/n\Z)^2$ for each $\sigma$. Furthermore, we get a commutative diagram
$$
\xymatrix{ (\Z/n\Z)^2\ar^{\sigma}[d] &\pi^*R^1f_*\Z/n\Z|_U
  \ar_{\lambda_1}[l]\ar@{-->}^{\Lambda_{\sigma}}[d]\\
  (\Z/n\Z)^2\ar^{\lambda_{\sigma}^{-1}}[r] & \pi^*R^1f_{\sigma *} \Z/n\Z|_U}
$$
of constant local systems which can be descended to $U$.
Composing $\Lambda_{\sigma}$ with the reference anti-isometry $\phi_1$ gives a new
anti-isometry $\phi_{\sigma}:R^1f_*\Z/n\Z|_U\to R^1f_{\sigma
  *}\Z/n\Z|_U$. This can be viewed as an anti-isometry  from $\E[n]|_U\to
\E_\sigma[n]|_U$, thanks to the canonical  isomorphisms $\E[n]|_U\cong
R^1f_*\Z/n\Z|_U$ and $\E'[n]|_U\cong \linebreak R^1f_{\sigma *}\Z/n\Z|_U$.

The   set  of anti-isometries of $(\Z/n\Z)^2\to
(\Z/n\Z)^2$ forms a torsor over $G$. In other
words, all anti-isometries   are  given by composing $\phi_1$ with an element of $G$.
Therefore as $\sigma$ varies in $ G$
we obtain all possible anti-isometries $\E_t[n]\to
\E_{\sigma(t)}[n]=\E_t[n]$.
We say that $\sigma\in G$ is irreducible if the  FK data $(E=\E_t, E'
= \E_{\sigma(t)},\phi_{\sigma})$ is irreducible.
By the results of the previous section, all $\sigma \in G$ are irreducible when $ n \equiv 3 \mod 4 $ is a prime. For $ n=2 $, all five $ \sigma \in SL_2(\ZZ/2\ZZ) $ such that $ \sigma \neq \id_2 $ are irreducible. Moreover,  when $ n \equiv 1 \mod 4$ is a prime, there is at least one irreducible $ \sigma \in G $.

Fix an irreducible element $\sigma\in G=SL_2(\Z/n\Z)$.  Let
$X_{n,\sigma}^o\subset \E\times_U\E_\sigma/\Gamma_\phi$ be the relative $\Theta$
divisor with respect to a principal polarization satisfying the
conditions of theorem \ref{thm:fk} on the fibres. By assumption, some fibre of $X_{n,\sigma}^o\to C$ is a smooth
irreducible curve of genus $2$, therefore the same holds for 
all fibres over some Zariski open $V\subset U$ containing $t$. Let $F:X_{n,\sigma}\to C$ be a relatively minimal nonsingular compactification
of  the preimage of $V$ in $X_{n,\sigma}^o$.

\begin{thm}
With the notation as in the last paragraph,  $F:X_\sigma\to C$ is extremal, and therefore Picard maximal.
\end{thm}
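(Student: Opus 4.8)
The plan is to apply the Proposition (and the Lemma): since $X_\sigma$ carries a fibration $F:X_\sigma\to C$ with a section (induced by the zero-section of the Jacobian fibration) whose general fibre has genus $2$, it suffices to prove that $X_\sigma/C$ is extremal, i.e.\ that $IH^1(R^1F_*\C)^{(1,1)}=0$. The key is to relate $R^1F_*\Q$ to the local systems of the two elliptic modular surfaces $\E\to C$ and $\E_\sigma\to C$. Over the open set $V\subset U$ where $F$ is smooth with fibre the genus-two curve $\Theta_v$, the Jacobian of $\Theta_v$ is the abelian surface $J_v=(\E_v\times\E_{\sigma,v})/\Gamma_{\phi}$, which is isogenous to $\E_v\times\E_{\sigma,v}$. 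Hence over $V$ there is an isogeny of abelian schemes, and correspondingly an isomorphism of $\Q$-local systems $R^1F_*\Q|_V\cong R^1f_*\Q|_V\oplus R^1f_{\sigma*}\Q|_V$. Passing to intersection cohomology over $C$ (using that $IH^1$ of a local system on $U$ depends only on its restriction to any dense open, and is additive in the local system), I get
\begin{equation}
  \label{eq:split}
IH^1(R^1F_*\Q)\cong IH^1(R^1f_*\Q)\oplus IH^1(R^1f_{\sigma*}\Q)
\end{equation}
as Hodge structures, where the right-hand pieces are the interesting summands for $\E\to C$ and $\E_\sigma\to C$ respectively.

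Next I would invoke Shioda's Theorem~\ref{thm:shioda}: since $n\ge 2$ we have $-I\notin\Gamma(n)$ (for $n\ge 3$ this is immediate; for $n=2$ one uses the Remark after Theorem~\ref{thm:shioda} together with Igusa's computation, or simply that $\E_{\Gamma(2)}$ is rational), so $\E\to C$ is extremal, meaning $IH^1(R^1f_*\C)^{(1,1)}=0$. The surface $\E_\sigma\to C$ is the pullback of the \emph{same} universal family along the automorphism $\sigma$ of $C$, hence is abstractly isomorphic to $\E\to C$ as a fibred surface; in particular it is also an elliptic modular surface for a conjugate of $\Gamma(n)$, so it too is extremal. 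Therefore both summands on the right of \eqref{eq:split} have vanishing $(1,1)$-part, whence $IH^1(R^1F_*\C)^{(1,1)}=0$, i.e.\ $X_\sigma/C$ is extremal. Picard maximality then follows from the Proposition.

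The main obstacle, and the step that needs care, is the identification $R^1F_*\Q|_V\cong R^1f_*\Q|_V\oplus R^1f_{\sigma*}\Q|_V$ \emph{as polarized variations of Hodge structure}, and more importantly the compatibility of the passage to $IH^1$ over all of $C$ with the direct-sum decomposition. Two points require attention: first, $X_\sigma$ is a \emph{nonsingular compactification of the preimage of $V$}, so its fibration may differ over $C\setminus V$ from the naive $\Theta$-model; but because $IH^1(R^1F_*\Q)$ is the intrinsic subquotient $L^1/L^2$ of $H^2(X_\sigma)$ (as noted after the Proposition) and depends only on the restriction of the local system $R^1F_*\Q$ to the dense open $V$, this compactification ambiguity is harmless. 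Second, one must check that the isogeny $J_v\sim \E_v\times\E_{\sigma,v}$ lives in a family over $V$ — this is exactly the content of the Frey--Kani construction as set up in the previous section, where $X^o_{n,\sigma}\subset(\E\times_U\E_\sigma)/\Gamma_\phi$ is the \emph{relative} $\Theta$-divisor, so the isogeny $\pi:\E\times_U\E_\sigma\to (\E\times_U\E_\sigma)/\Gamma_\phi$ and the principal polarization are already defined relatively over $U$. Granting these, \eqref{eq:split} is an isomorphism of Hodge structures and the argument above goes through.
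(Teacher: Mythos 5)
Your proposal is correct and follows essentially the same route as the paper: both reduce to the fibrewise isogeny $Pic^0(X^o_{n,\sigma}/U)\sim \E\times_U\E_\sigma$, deduce the splitting $R^1F_*\Q|_V\cong R^1f_*\Q|_V\oplus R^1f_{\sigma*}\Q|_V$ as (polarized) Hodge modules, and conclude by applying Shioda's extremality of the elliptic modular surface to each summand via $R^1f_{\sigma*}\Q\cong \sigma^*R^1f_*\Q$. Your additional remarks on the $n=2$ case and on the insensitivity of $IH^1$ to the choice of compactification are consistent with, and slightly more explicit than, what the paper writes.
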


\begin{proof} Let us replace $U$ by $V$.
  Since $Pic^0(X_{n,\sigma}^o/U)\to \E\times_U\E_\sigma$ is a fibrewise
  isogeny, $R^1F_*\Q|_U \cong (R^1f_*\Q \oplus  R^1f_{\sigma *}\Q)|_U$
  as Hodge modules. Note also that $ R^1f_{\sigma*}\Q|_U\cong  \sigma^*R^1f_{*}\Q|_U$. Therefore
 $$IH^1(R^1F_*\C)^{(1,1)}=  IH^1(R^1f_*\C)^{(1,1)} \oplus IH^1( R^1f_{*}\C)^{(1,1)}=0$$
\end{proof}

We  mention a  few related examples.
\begin{enumerate}
\item[(A)] Choose a finite index subgroup $\Gamma\subseteq
\Gamma(n)$. Let $X_{\Gamma,\sigma}$ be a minimal desingularization of
the fibre product
$X_{n,\sigma}\times_{C_{\Gamma(n)}}C_{\Gamma}$. By the  same argument, we
can see that $X_{\Gamma, \sigma}\to C_{\Gamma}$ is extremal, and
consequently Picard maximal.
\smallskip

\item[(B)]  

Let $f:X\to \PP^1$ be obtained by blowing up $\PP^2$ along the base locus of the pencil
$$\{x^4+y^4+z^4 + t(x^2y^2+y^2z^2+z^2x^2)=0\}_{t\in \PP^1}$$
These curves are nonsingular when $t\notin S=\{-1, \pm 2,\infty\}$. 
 Beauville \cite[p 5]{beauville} shows  that when $t\notin S$, the
 curve $E_t=X_t/\{x\mapsto \pm x\}$  can
be identified with the elliptic curve $ y^2 =x(x-1)(x+t+1)$, and
moreover the map induces an isogeny $Pic^0(X_t)\sim E_t^3$. 
 In fact this is an isogeny of abelian schemes  from
$Pic^0(X/\PP^1-S)$ to $\E^3|_{\{\PP^1-S\}}$, where $g:\E\to C_{\Gamma(2)}=\PP^1$ is the Legendre
family  pulled backed along the automorphism that fixes $ 0,1 $ and sends $t \mapsto -t-1$.
Thus 
\begin{equation}
  \label{eq:Rfg}
  R^1f_*\Q\cong (R^1g_*\Q)^3
\end{equation}
 and therefore $X\to \PP^1$ is
extremal. It follows that $X$ is  Picard maximal but this was already clear from the fact
that it was rational. However, we can 
create a  nonrational surface by  choosing a subgroup
$\Gamma\subseteq \Gamma(2)$ of sufficiently large finite index, and letting
 $X\to C_\Gamma$ be the desingularized pullback of $S$  to
$C_{\Gamma}$ under the projection $C_\Gamma\to C_{\Gamma(2)}=\PP^1$.
The isomorphism \eqref{eq:Rfg}
will persist if we pull it back to $C_\Gamma$. Thus we see that $X$ is
also extremal and hence Picard maximal.
\smallskip 
\item[(C)]

Let $ E_t $ be the elliptic curve given by \[ y^2 = - \frac{t^3}{(1+t)^3}(x-1)(x+t)(x+1/t),\] for $ t \in U = \PP^1 \backslash \{ 0, \pm 1, \infty \} $. Then $ E_t $ is isomorphic to the elliptic curve given by $ y^2 = x(x-1)(x-t) $ as follows: The elliptic curve $ y^2 = x(x-1)(x-t) $ is the double cover of $ \PP^1 $ branched over $ \{t, 1, 0, \infty\} $. If one pulls back this elliptic curve along the automorphism of $ \PP^1 $ that sends \[ x \mapsto \frac{-t}{1+t}(x-1), \] one gets $ E_t $ branched over $ \{-t, -1/t, 1, \infty\} $. This works in families, and the family $ E_t $ is isomorphic to the Legendre family. 

Now consider the family $ C_t $ of genus $ 2 $ curves given by \[ y^2 =-\frac{t^3}{(1+t)^3} (x^2-1)(x^2+t)(x^2+1/t).\] These curves are nonsingular for $ t \in U $. We have a surjective map $ C_t \to E_t $ which sends $ (x,y) \mapsto (x^2,y) $. Under this map, the differential $ dx/2y \in H^0(E_t, \omega_{E_t}) $ pulls back to $ xdx/y \in H^0(C_t,\omega_{C_t}) $. Consider the automorphism $ \tau $ of $ C_t $ given by $ x \mapsto 1/x $ and $ y \mapsto y\sqrt{-1}/x^3 $. We have $ \tau^*(xdx/y) = \sqrt{-1}dx/y \in H^0(C_t,\omega_{C_t}) $. Thus the two differential forms $ xdx/y $ and $ \tau^*(xdx/y) $ give a basis for $ H^0(C_t,\omega_{C_t}) $. Now the proof of Lemma 2 in \cite[p 4]{beauville} gives an isogeny $ Pic^0(C_t) \sim E_t^2 $. 

Let $ \mathcal C $ denote the family of the curves $ C_t $ as $ t $ varies in $ U $, with the associated map $ f : \mathcal C \to U $, and let $ \mathcal E $ denote the family of the curves $ E_t $ as $ t $ varies in $ U $ with the associated map $ g : \mathcal E \to U $. The argument above works in a family and gives an isogeny of abelian schemes from $ Pic^0(\mathcal C/U) $ to $ \mathcal E \times_U \mathcal E $ for all $ t \in U $. Then \[ R^1f_*\QQ = (R^1g_*\QQ)^{\oplus 2}, \] and hence $ \mathcal C \to U $ is extremal. Thus, $ \mathcal C $ is Picard maximal. 

\end{enumerate}

\end{document}